\theoremstyle{plain}
\newtheorem{thm}{Theorem}[section]
\newtheorem{lem}[thm]{Lemma}
\newtheorem{cor}[thm]{Corollary}
\newtheorem{prop}[thm]{Proposition}
\theoremstyle{definition}
\newtheorem{ex}[thm]{Example}
\DeclareMathOperator{\CH}{CH}  
\DeclareMathOperator{\K0}{K_0}   
\DeclareMathOperator{\codim}{codim} 
\DeclareMathOperator{\im}{im} 
\DeclareMathOperator{\ind}{ind} 
\DeclareMathOperator{\id}{id} 
\newcommand{\XX}{{}_\xi X}  
\newcommand{\ZZ}{\mathbb{Z}}   
\newcommand{\cc}{\mathfrak{c}}  
\newcommand{\Fp}{\mathbb{F}_p}  
\newcommand{\EE}{\mathcal{E}}  
\newcommand{\LL}{\mathcal{L}}   
\DeclareMathOperator{\resg}{\it{res}_\gamma}   
\DeclareMathOperator{\resch}{\it{res}_{CH}}   
\DeclareMathOperator{\resk}{\it{res}_{k'/k}}   
\DeclareMathOperator{\resl}{\it{res}_{l/k}}  
\DeclareMathOperator{\itau}{\tau^{{\it i/i}+1}K_0(X)}  
\DeclareMathOperator{\igam}{\gamma^{{\it i/i}+1}K_0(X)}  
\DeclareMathOperator{\igamtwist}{\gamma^{{\it i/i}+1}K_0({}_\xi X)}  
\DeclareMathOperator{\citau}{c_{\it i}^\tau}  
\DeclareMathOperator{\cigam}{c_{\it i}^\gamma}  
\title{The J-invariant and Tits algebras for groups of inner type $E_6$}
\author{Caroline Junkins}
\begin{document}

\begin{abstract}
A connection between the indices of the Tits algebras of a split linear algebraic group $G$ and the degree one parameters of its motivic 
$J$-invariant was introduced by Qu\'eguiner-Mathieu, Semenov and Zainoulline through use of the second Chern class map in the Riemann-Roch theorem without denominators. 
In this paper we extend their result to higher Chern class maps and provide applications to groups of inner type $E_6$. 
\end{abstract}

\maketitle


\section*{Introduction}

For a reductive group $G$, invariants known as the \textit{Tits algebras} were introduced by J. Tits in \cite{Ti71} and have proven to be an invaluable tool for the computation of the $K$-theory of twisted flag varieties by Panin \cite{Pa94} and for the index reduction formulas by Merkurjev, Panin and Wadsworth \cite{MPW}. Furthermore, the Tits algebras are essentially the only cohomologogical invariant of degree 2 (cf. \cite[Ex. 31.21]{INV}, and have applications to both the classification of linear algebraic groups and the study of the associated homogeneous varieties. \\
\indent 
The $J$-invariant, as defined by Petrov, Semenov and Zainoulline in \cite{PSZ}, is an invariant of $G$ which describes the motivic behaviour of the variety of Borel subgroups of $G$. For a prime $p$, the $J$-invariant of $G$ modulo $p$ is given by a set of integers $J_p(G)=(j_1,\dots,j_r)$. We consider also a (possibly empty) subset $J_p^{(1)}(G)=(j_1,\dots,j_s)$, $s\leq r$, consisting of the parameters of the $J$-invariant of degree 1. \\
\indent 
Motivated by the work \cite{GZ10}, Qu\'eguiner-Mathieu, Semenov and Zainoulline discovered a connection 
between these degree 1 parameters and the indices of the Tits algebras of $G$. This connection is developed in \cite{QSZ}, through use of the second Chern class map in the Riemann-Roch theorem without denominators. 
The goal of this paper is to extend their result through use of higher Chern class maps (cf. Theorem~\ref{mainthm}). 
We then apply this result to a group $G$ of inner type $E_6$. We provide an explicit connection between the values of $J_3^{(1)}(G)$ and the index of the Tits algebra of $G$ (see Proposition~\ref{maincor}). This result is used by Garibaldi, Petrov and Semenov to give finer information, summarized in their Table 10A, on the $J$-invariant for groups of type $E_6$ \cite{GPS}.

This paper is organized as follows. In the first section, we review the notion of characteristic classes and introduce the topological and $\gamma$-filtrations on the Grothendieck group $\K0$ of a smooth projective variety. In Section 2, we look more closely at the $\gamma$-filtration of the variety $X=G/B$ of Borel subgroups, and give a simplified definition through use of the Steinberg basis. We then consider a twisted form $\XX$ of $X$ by means of a $G$-torsor $\xi\in Z^1(k, G)$ and the $\gamma$-filtration of $\K0(\XX)$. In Section 3, we recall the definition of the Tits map and its relation to our primary objects of concern, the common index $i_c$ and the $J$-invariant of $\xi$. Section 4 provides the main result of the paper, a relationship between the common index and the possible values of the indices of the $J$-invariant of degree 1. In the final section, we give an application of this result to a group of inner type $E_6$.

\noindent{\small{\textbf{Acknowledgments.}  
I am very grateful to my PhD supervisor K. Zainoulline for useful discussions on the subject of this paper.
This work has been partially supported by his NSERC Discovery grant 385795-2010.
The last part of these notes has been written during my research visit to the Johannes Gutenberg University of Mainz.
My special gratitude is due to N. Semenov and the University of Mainz for their hospitality and support. Finally, I would like to thank B. Calm\`es for providing many helpful suggestions on the first draft of these notes. 
}}


\section{Gamma filtration on \texorpdfstring{$\K0$}{K0} and the Chern class map}

In the present section we recall several useful properties of the topological filtration
and the $\gamma$-filtration on Grothendieck's $K_0$ of a smooth projective variety.
The reader is advised to consult \cite{SGA6}, \cite{FL} and \cite[ch.15]{Fu} for more details.

Let $X$ be a smooth projective variety over a field $k$.
Consider the topological filtration on $\K0(X)$ (cf. \cite[Ex.~15.1.5]{Fu}) given by 
$$
\tau^i\K0(X) =\langle [{\mathcal O}_V]\mid \codim\; V\geq i \rangle,
$$
where ${\mathcal O}_V$ is the structure sheaf of a closed subvariety $V$ in $X$. 
We denote by $\itau$, $i\ge 0$ the degree $i$ component $\tau^i\K0(X)/\tau^{i+1}\K0(X)$ of the corresponding graded ring.
There is a surjection
$$
p\colon \CH^i(X) \twoheadrightarrow \itau,\quad V\mapsto[{\mathcal O}_V],
$$
from the Chow group of codimension $i$ cycles.

For a vector bundle $\EE$ on $X$, the total Chern class $c(\EE)=1+c_1(\EE)t+ c_2(\EE)t^2+ \dots$ is an element of $\CH(X)[t]$, and by the Whitney sum formula, it defines a group homomorphism
$$
c:\K0(X) \to \CH(X)
$$
For $\alpha\in\K0(X)$, $c_i(\alpha)$ is the component of $c(\alpha)$ in $\CH^i(X)$, giving a group homomorphism
$$
c_i\colon \tau^i\K0(X) \to \CH^i(X)
$$
defined by taking the $i$-th Chern class.

\begin{lem} \label{lem:prelims}
For a smooth projective variety $X$ over a field $k$, $c_i(\tau^j\K0(X))=0$ for all $0<i<j$, and the induced homomorphism 
$$
\citau\colon \itau\to \CH^i(X)
$$
is an isomorphism over the coefficient ring $\ZZ[\tfrac{1}{(i-1)!}]$ for all $i>0$. 
\end{lem}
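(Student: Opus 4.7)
\medskip
\noindent\textbf{Proof proposal.}

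The plan is to deduce both assertions from the Riemann--Roch theorem without denominators, which identifies the $i$-th Chern class of the structure sheaf $[{\mathcal O}_V]$ of a subvariety $V\subset X$ of codimension $\ge i$ with an explicit multiple of its class in $\CH^i(X)$.

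First, recall that $\tau^j\K0(X)$ is generated as an abelian group by the elements $[{\mathcal O}_V]$ with $\codim V\ge j$, so to prove $c_i(\tau^j\K0(X))=0$ for $0<i<j$ it is enough to check $c_i([{\mathcal O}_V])=0$ whenever $\codim V>i$. For this I would reduce to the case of $V$ smooth via Chow's moving lemma (or resolution of singularities) and then apply the Koszul resolution
$$
[{\mathcal O}_V]=\sum_{a=0}^{c}(-1)^a[\Lambda^a N^\vee]\in\K0(X),
$$
where $N$ denotes the normal bundle of $V$ in $X$ of rank $c=\codim V$. The splitting principle and the Whitney sum formula express the total Chern class $c([{\mathcal O}_V])$ in terms of Chern roots of $N$, and a direct computation shows it vanishes in all degrees strictly smaller than $c$. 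This not only yields the vanishing assertion but also guarantees that $c_i$ descends to the announced map $\citau\colon\itau\to\CH^i(X)$.

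For the isomorphism statement, I would compute the composition $\citau\circ p\colon\CH^i(X)\to\CH^i(X)$. Applying the same Koszul argument to $V$ of codimension exactly $i$ yields the classical identity
$$
c_i([{\mathcal O}_V])=(-1)^{i-1}(i-1)!\,[V]\quad\text{in }\CH^i(X),
$$
which is precisely the Riemann--Roch formula without denominators (see \cite{SGA6}, \cite{FL}, or \cite[Ex.~15.3.6]{Fu}). Therefore $\citau\circ p$ equals multiplication by $(-1)^{i-1}(i-1)!$. After base change to $\ZZ[\tfrac{1}{(i-1)!}]$ this composition becomes $\pm\id$, which forces the surjection $p$ to be injective and $\citau$ to be bijective, with explicit inverse $(-1)^{i-1}(i-1)!^{-1}\cdot p^{-1}$.

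The main technical obstacle is establishing the Riemann--Roch identity $c_i([{\mathcal O}_V])=(-1)^{i-1}(i-1)!\,[V]$ itself, which ultimately relies on a splitting-principle computation of $\lambda_{-1}(N^\vee)$ in terms of Chern roots of the normal bundle, combined with a deformation-to-the-normal-cone argument to reduce from an arbitrary regular embedding to the trivial one. Since this identity is classical and well-documented in the cited references, I would simply invoke it rather than reprove it from scratch, and devote the exposition instead to the bookkeeping with the filtration and the invertibility of $(i-1)!$.
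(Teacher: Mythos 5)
Your proposal is correct and takes essentially the same approach as the paper: both ultimately invoke Riemann--Roch without denominators (Fulton's Ex.~15.3.6, also in SGA6 and Fulton--Lang) for both the vanishing $c_i(\tau^jK_0(X))=0$ when $i<j$ and the identity $\citau\circ p=(-1)^{i-1}(i-1)!$, from which the isomorphism over $\ZZ[\tfrac{1}{(i-1)!}]$ follows exactly as you argue. The only caveat is that your sketched Koszul identity $[\mathcal{O}_V]=\sum_a(-1)^a[\Lambda^aN^\vee]$ in $K_0(X)$ holds as stated only when $V$ is globally cut out by a section of a rank-$c$ bundle on $X$ (otherwise one needs deformation to the normal cone, as you note), but since you explicitly defer to the cited classical result rather than relying on that sketch, this does not affect the correctness of the proposal.
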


\begin{proof}
From \cite[Ex 15.3.6]{Fu} it can be seen that $c_{i}(\tau^{i+1}\K0(X))=0$ for all $i\in\ZZ_{>0}$, and by the definiton of the topological filtration, $\tau^j\K0(X)\subseteq \tau^i\K0(X)$ for all $j>i$. Thus, $c_i(\tau^j\K0(X))=0$ for all $0<i<j$.

In \cite[Ex. 15.3.6]{Fu} it is shown that the composite $\citau\circ p$ is simply multiplication by $(-1)^{i-1}(i-1)!$. This result implies that $\citau$ is an isomorphism for $i\leq 2$ and, moreover, the map $\citau$ is an isomorphism over the coefficient ring $\ZZ[\tfrac{1}{(i-1)!}]$.
\end{proof}

For any $x\in\K0(X)$, let $\gamma(x)=1+\gamma_1(x)t+\gamma_2(x)t^2+\dots$ denote the total characteristic class of $x$
with values in $K_0$ as defined in \cite[Ex. 3.2.7(b)]{Fu}. We follow the convention $\gamma_1([\LL])=1-[{\LL^\vee}]$ for any line bundle $\LL$ over $X$, where $\LL^\vee$ denotes the dual of $\LL$.

\begin{ex}
Using the Whitney sum formula we obtain the following results
for $i=1,2$ by computing the total Chern classes,
$$
c(\gamma_1([\LL]))= c(1-[\LL^\vee])=\frac{1}{1-c_1(\LL)t}=1+c_1(\LL)t+c_1(\LL)^2t^2+\dots
$$
This gives $c_1(\gamma_1([\LL]))=c_1(\LL)$.
Similarly,
$$
c(\gamma_1([\LL_1])\gamma_1([\LL_2]))=c((1-[\LL_1^\vee])(1-[\LL_2^\vee]))=
\frac{c(1)\cdot c([(\LL_1\otimes \LL_2)^\vee])}{c([\LL_1^\vee])\cdot c([\LL_2^\vee])}
$$
$$
=(1-(c_1(\LL_1)+c_1(\LL_2))t)(1+c_1(\LL_1)t+c_1(\LL_1)^2t^2+\dots)(1+c_1(\LL_2)t+c_1(\LL_2)^2t^2+\dots)
$$
Hence $c_2(\gamma_1([\LL_1])\gamma_1([\LL_2]))=-c_1(\LL_1)c_1(\LL_2)$.

By the definition of these characteristic classes (cf. \cite[Ex 15.3.6]{Fu}), we have in general
\begin{equation} \label{firsteq}
c_i\bigl(\gamma_1([\LL_1])\ldots \gamma_1([\LL_i])\bigr)=(-1)^{i-1}(i-1)!\cdot c_1(\LL_1)\cdot \ldots\cdot c_1(\LL_i).
\end{equation}
\end{ex}

The Grothendieck $\gamma$-filtration on 
$\K0(X)$ is defined by
$$
\gamma^i\K0(X)=
\langle \gamma_{i_1}(x_1)\cdot \ldots \cdot \gamma_{i_m}(x_m) \mid
i_1+\ldots + i_m\ge i,\; x_l\in \K0(X)\rangle,
$$
(cf. \cite[Ex.15.3.6]{Fu}, \cite[Ch.3 and 5]{FL}).  
Let $\igam$ denote the degree $i$ component $\gamma^i\K0(X)/\gamma^{i+1}\K0(X)$ of the corresponding graded ring. 

It is known that $\gamma^i\K0(X)$ is contained in $\tau^i\K0(X)$ for every $i\ge 0$, 
and they coincide for $i\leq 2$  (cf. \cite[Prop.2.14]{Ka98}). Therefore, by Lemma \ref{lem:prelims}
the Chern class map $c_i$ restricted to $\gamma^i\K0(X)$ vanishes on $\gamma^{i+1}\K0(X)$, and induces a map 
$$
\cigam\colon \igam\to \CH^i(X). 
$$

\begin{ex}
For $i=1$ we have $\gamma^{1/2}\K0(X)=\tau^{1/2}\K0(X)$ and $c_1\circ p=\id_{\CH^1(X)}$, giving an isomorphism 
$$
c_1\colon \gamma^{1/2}\K0(X)\tilde{\to}\CH^1(X).
$$

In the previous example, we saw that the map $c_1$ sends $\gamma_1([L])$ to $c_1(L)$. For $i=2$ we again have $\gamma^2\K0(X)=\tau^2\K0(X)$, but this time $\gamma^3\K0(X)$ does not necessarily coincide with $\tau^3\K0(X)$. We may form an exact sequence,
$$
0\to \tau^3\K0(X)/\gamma^3\K0(X)\to \gamma^{2/3}\K0(X) \to \tau^{2/3}\K0(X)\to 0.
$$
Replacing $\tau^{2/3}\K0(X)$ with $\CH^2(X)$, the map $c_2:\gamma^{2/3}\K0(X)\to \CH^2(X)$ is surjective. In addition, $\ker(c_2)\cong \tau^3\K0(X)/\gamma^3\K0(X)$. Note that for all $i\ge0$, (cf. \cite[Prop. 2.14]{Ka98}) $\tau^i\K0(X)\otimes\mathbb{Q}\cong\gamma^i\K0(X)\otimes\mathbb{Q}$, thus $\ker(c_2)$ is torsion. 
\end{ex}

\begin{prop} \label{propsurj}
The Chern class map $\cigam\colon \igam\to \CH^i(X)$
is surjective over the coefficient ring $\ZZ[\tfrac{1}{(i-1)!}]$.
\end{prop}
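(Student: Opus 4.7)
The plan is to factor $\cigam$ as the composite $\citau \circ \iota^*$, where $\iota^* \colon \igam \to \itau$ is the natural map induced by the filtration inclusion $\gamma^i\K0(X) \subseteq \tau^i\K0(X)$; this is well-defined modulo the respective filtrations since $\gamma^{i+1}\K0(X) \subseteq \tau^{i+1}\K0(X)$. By Lemma \ref{lem:prelims}, $\citau$ is already an isomorphism over $\ZZ[\tfrac{1}{(i-1)!}]$, so surjectivity of $\cigam$ over this ring reduces to surjectivity of $\iota^*$ over the same ring, i.e.\ to showing that the quotient $\tau^i\K0(X)/\gamma^i\K0(X)$ becomes trivial after inverting $(i-1)!$.

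I would proceed generator by generator: $\tau^i\K0(X)$ is generated by the structure-sheaf classes $[{\mathcal O}_V] = p([V])$ for closed subvarieties $V \subseteq X$ of codimension $i$. In the favourable case when $V$ is a local complete intersection cut out globally by a rank-$i$ vector bundle $\EE$ on $X$, the Koszul resolution gives $[{\mathcal O}_V] = \sum_{j=0}^{i}(-1)^j [\Lambda^j \EE^\vee]$, which on a splitting cover where $\EE$ decomposes as $\bigoplus_{k=1}^{i}\LL_k$ becomes $\prod_{k=1}^{i}(1-[\LL_k^\vee]) = \prod_{k=1}^{i}\gamma_1([\LL_k])$ and lies manifestly in $\gamma^i\K0(X)$. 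Equation (\ref{firsteq}) then identifies the $i$-th Chern class of this element with $(-1)^{i-1}(i-1)!\cdot c_1(\LL_1)\cdots c_1(\LL_i) = (-1)^{i-1}(i-1)![V]$ in $\CH^i(X)$, matching $\citau \circ p$. Dividing by $(-1)^{i-1}(i-1)!$, invertible in $\ZZ[\tfrac{1}{(i-1)!}]$, produces the required preimage of $[V]$ in $\igam$.

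The main obstacle is extending this to an arbitrary closed subvariety $V$ that is not cut out globally by a vector bundle. I would handle this either by passing to a resolution (or a blow-up making the embedding a local complete intersection) and transferring the computation back via pushforward, keeping all denominators within $\ZZ[\tfrac{1}{(i-1)!}]$; or, more cleanly, by invoking the stronger structural consequence of Riemann--Roch without denominators (cf.\ \cite[Ex.~15.3.6]{Fu}, \cite[Prop.~2.14]{Ka98}), namely that the quotient $\tau^i\K0(X)/\gamma^i\K0(X)$ is annihilated by $(i-1)!$. Either route yields surjectivity of $\iota^* \otimes \ZZ[\tfrac{1}{(i-1)!}]$, and the proposition follows.
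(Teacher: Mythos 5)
Your overall strategy — reduce to the factor of $(i-1)!$ that separates the two filtrations — is sound, and your factorization $\cigam = \citau\circ\iota^*$ is exactly the one the paper uses. The Koszul computation is also a correct special case. But there is a genuine gap in how you pass from that special case to general cycles, and the way the paper closes it is worth noting because it sidesteps the issue entirely.

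The problem: your Koszul argument produces an element of $\gamma^i\K0(X)$ mapping onto $[{\mathcal O}_V]$ only when $V$ is the zero locus of a regular section of a rank-$i$ vector bundle on $X$, which is a very restrictive hypothesis. Your first proposed fix (blow up, transfer by pushforward) is not carried out, and it is not obvious that the pushforward of an element of $\gamma^i$ remains in $\gamma^i$ with controlled denominators — the $\gamma$-filtration is not naturally compatible with proper pushforward the way the topological filtration is. Your second fix is essentially a citation that $\tau^i\K0(X)/\gamma^i\K0(X)$ is $(i-1)!$-torsion; but the references you name do not state this: \cite[Ex.~15.3.6]{Fu} gives the Chern class formula $\citau\circ p = (-1)^{i-1}(i-1)!$, and \cite[Prop.~2.14]{Ka98} gives only the rational coincidence $\tau^i\otimes\mathbb{Q}\cong\gamma^i\otimes\mathbb{Q}$. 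Since the $(i-1)!$-torsion statement (together with the Lemma) is essentially equivalent to the Proposition you are trying to prove, invoking it without an independent justification leaves the argument circular.

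The paper avoids all of this by never trying to lift $[{\mathcal O}_V]$ itself into $\gamma^i\K0(X)$. Instead, for an arbitrary $x\in\K0(X)$ it uses the element $\gamma_i(x)$, which lies in $\gamma^i\K0(X)$ by the very definition of the $\gamma$-filtration, and proves via the splitting principle the universal identity $c_i(\gamma_i(x)) = (-1)^{i-1}(i-1)!\,c_i(x)$ (equation \eqref{gammatoc}). Applying this to $x = [{\mathcal O}_V]$ and combining with $c_i([{\mathcal O}_V]) = (-1)^{i-1}(i-1)![V]$ shows that $\gamma_i([{\mathcal O}_V])\in\gamma^i\K0(X)$ maps under $c_i$ to $((i-1)!)^2[V]$, so $[V]$ is hit after inverting $(i-1)!$. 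No hypothesis on $V$ is needed, no resolution or pushforward is required, and no external torsion bound is invoked. This is the step your proposal is missing; without it the surjectivity for a general codimension-$i$ cycle is not established.
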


\begin{proof}
By Lemma \ref{lem:prelims}, the map $\citau:\tau^{i/{i+1}}(X)\to \CH^i(X)$ is surjective over the coefficient ring $\ZZ[\tfrac{1}{(i-1)!}]$. Since $\gamma^i\K0(X)\subseteq \tau^i\K0(X)$ for all $i$, we have an obvious map $\gamma^{i/{i+1}}\K0(X)\to \tau^{i/{i+1}}\K0(X)$ defined by sending $x+\gamma^{i+1}\K0(X) \mapsto x+\tau^{i+1}\K0(X)$. By definition, $\cigam$ is the composition of these two maps. 

\begin{displaymath}
    \xymatrix{
        \gamma^{i+1}\K0(X) \ar[r] \ar@/_/@<-1ex>[rr]_{\cigam} &\tau^{i/{i+1}}\K0(X)  \ar@{->>}[r]^\citau &\CH^i(X)  }
\end{displaymath}

\noindent  Thus $\im(\cigam)\subseteq \im(\citau)$, and so it remains to show that $\im(\citau)\subseteq \im(\cigam)$ over $\ZZ[\tfrac{1}{(i-1)!}]$, for all $i$. 

Consider an arbitrary element $x\in\K0(X)$. By the splitting principle we may assume that $x=\LL_1\oplus\dots\oplus \LL_n$ where $\LL_1,\dots,\LL_n$ are line bundles over $X$. By the properties of characteristic classes, $\gamma_i(\LL_1\oplus\dots\oplus \LL_n)=0$ for all $i> n$ and $\gamma_i(\LL_1\oplus\dots\oplus \LL_n)=s_i(\gamma_1(\LL_1),\dots,\gamma_1(\LL_n))$ for all $0<i\le n$, where $s_i(\gamma_1(\LL_1),\dots,\gamma_1(\LL_n))$ is the $i$-th elementary symmetric polynomial in variables $\gamma_1(\LL_1),\dots,\gamma_1(\LL_n)$. Thus, taking the total Chern class, we have by \eqref{firsteq} and Lemma \ref{lem:prelims}
\begin{align*}
c(\gamma_i(\LL_1\oplus\dots\oplus \LL_n))&=c(s_i(\gamma_1(\LL_1),\dots,\gamma_1(\LL_n)))\\
			&=\prod_{1\leq j_1<\dots<j_i\leq n}(1+(-1)^{i-1}(i-1)!c_1(\LL_{j_i})\cdots c_1(\LL_{j_i})t^i+\dots)\\
			&=1+(-1)^{i-1}(i-1)!\cdot s_i(c_1(\LL_1),\dots,c_1(\LL_n))t^i+\dots
\end{align*}
Thus, $c_i(\gamma_i(\LL_1\oplus\dots\oplus \LL_n))=(-1)^{i-1}(i-1)! s_i(c_1(\LL_1),\dots,c_1(\LL_n))$. In general,
\begin{equation} \label{gammatoc}
c_i(\gamma_i(x))=(-1)^{i-1}(i-1)! c_i(x)\text{ for all }x\in\K0(X).
\end{equation}
\end{proof}


\section{Gamma filtration on twisted flag varieties}

In the present section we discuss the $\gamma$-filtration on the variety of Borel subgroups. A reader is encouraged to consult \cite{QSZ} and \cite{GZ10} for further details.
\par
Let $G$ be a split simple linear algebraic group of rank $n$ over a field $k$. We fix a split maximal torus $T$
and a Borel subgroup $B\supset T$ of $G$. Let $W$ denote the Weyl group of $G$ with respect to $T$. From now on, we let $X=G/B$ denote the variety of Borel subgroups of $G$.

Let $\{g_w\}_{w\in W}$ be the Steinberg basis of $\K0(X)$ (cf. \cite{St75}). For each $w\in W$, $g_w$ is the class of a line bundle over $X$, and together they form a $\ZZ$-basis of $\K0(X)$. Combining this with the definition of the $\gamma$-filtration gives the following result. 

\begin{lem} The $i$-th term of the $\gamma$-filtration on $X=G/B$ is generated by products
$$
\gamma^{i/{i+1}}\K0(X)=\{\gamma_1(g_{w_1})\cdot \ldots\cdot \gamma_1(g_{w_i}) \mid w_1,\dots,w_i\in W\}.
$$ 
\end{lem}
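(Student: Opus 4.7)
The plan is to expand each generator of $\gamma^i\K0(X)$ as an integer combination of products of $\gamma_1$'s of Steinberg basis elements, plus remainder terms that lie automatically in $\gamma^{i+1}\K0(X)$. By definition $\gamma^i\K0(X)$ is spanned by products $\gamma_{i_1}(x_1)\cdots\gamma_{i_m}(x_m)$ with $\sum_l i_l\geq i$ and $x_l\in\K0(X)$, so it suffices to rewrite each such generator, modulo $\gamma^{i+1}\K0(X)$, in the claimed form.

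The key computation rests on two ingredients: the multiplicativity $\gamma_t(x+y)=\gamma_t(x)\gamma_t(y)$, and the fact that $\gamma_t(g_w) = 1 + \gamma_1(g_w)t$ for the line bundle $g_w$. Writing each $x_l = \sum_{w} n_{l,w}\, g_w$ in the Steinberg basis with $n_{l,w}\in\ZZ$, one obtains
$$
\gamma_t(x_l) = \prod_{w\in W}\bigl(1+\gamma_1(g_w)\,t\bigr)^{n_{l,w}},
$$
where negative exponents are handled as inverses of formal power series. The coefficient of $t^{i_l}$ on the right is a $\ZZ$-polynomial in the $\gamma_1(g_w)$'s in which every monomial has total degree exactly $i_l$, since each factor $(1+\gamma_1(g_w)t)^{n_{l,w}}$ contributes, for each power of $t$, a scalar multiple of the same power of $\gamma_1(g_w)$. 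Multiplying these expansions for $l=1,\ldots,m$ rewrites the original generator as a $\ZZ$-linear combination of monomials $\gamma_1(g_{w_1})\cdots\gamma_1(g_{w_N})$ with $N=\sum_l i_l\ge i$.

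To finish, note that any such monomial with $N>i$ already belongs to $\gamma^{i+1}\K0(X)$, since each $\gamma_1(g_w)\in\gamma^1\K0(X)$ and the $\gamma$-filtration is respected by products. Hence modulo $\gamma^{i+1}\K0(X)$ only the monomials with exactly $N=i$ survive, which is the asserted generating set. The main subtlety is the bookkeeping around negative exponents, where $(1+\gamma_1(g_w)t)^{-1} = \sum_{k\geq 0}(-\gamma_1(g_w))^k t^k$ is an infinite series; however, the $t^k$-coefficient still has degree exactly $k$ in $\gamma_1(g_w)$, so the homogeneity argument goes through without introducing spurious lower-degree terms.
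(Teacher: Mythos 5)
The paper states this lemma without proof, asserting only that it follows by combining the properties of the Steinberg basis with the definition of the $\gamma$-filtration. Your argument supplies exactly that reasoning (expand each $x_l$ in the Steinberg basis, use multiplicativity of $\gamma_t$ and the fact that $\gamma_t(g_w)=1+\gamma_1(g_w)t$, and observe that the $t$-grading matches the $\gamma_1$-degree so higher-degree monomials land in $\gamma^{i+1}\K0(X)$), and it is correct.
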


Consider the twisted form $\XX$ of $X$ by means of a $G$-torsor $\xi\in Z^1(k,G)$.
In general the group $\K0(\XX)$ is not generated by classes of line bundles. Hence,
in the definition of $\gamma^i\K0(\XX)$, some higher characteristic classes $\gamma_{j}(-)$
may appear for $j>1$.

For every field extension $k'/k$ and variety $Y$ over $k$ we have a restriction map
$$
\resk\colon \K0(Y)\to \K0(Y\times_k k'). 
$$
For every extension $k'/k$, the map $\resk\colon \K0(X)\to \K0(X\times_k k')$ is an isomorphism. In particular, for a splitting field $l$ of $\xi$,  $\K0(\XX\times_k l) \simeq \K0(X\times_k l)\simeq \K0(X)$, so we may consider the restriction map
$$
\resl:\K0(\XX)\to\K0(X).
$$

The main result of \cite{Pa94} says that the image of $\resl$ coincides with the sublattice $$\langle i_{w,\xi} g_w \rangle_{w\in W},$$ where $i_{w,\xi}\ge 1$ are indices of the respective Tits algebras, which will be introduced in the next section. 

Note that characteristic classes commute with restrictions, that is,
$$
\gamma_j \circ \resk=\resk\circ \gamma_j\text{ and }c_j\circ \resk=\resk\circ c_j.
$$ 

We will use the following commutative diagram
$$
\xymatrix{
\igamtwist \ar[r]^{\resg} \ar[d]_{\cigam} & \igam \ar[d]^{\cigam}\\
\CH^i(\XX) \ar[r]^{\resch} & \CH^i(X).
}
$$

\begin{prop}  \label{propform}
Consider the composite 
$$\phi_i\colon \igamtwist\stackrel{\resg}\longrightarrow \igam\stackrel{\cigam}\longrightarrow \CH^i(X).$$
The image of $\phi_1$ is generated by 
$i_{w,\xi} c_1(g_w)$ for all $w\in W$. In general, the image of $\pi_i$ is generated by the elements 
$$(i-1)!\binom{i_{w_1}}{i_1}\cdots \binom{i_{w_m}}{i_m} c_1(g_{w_1})^{i_1}\cdots c_1(g_{w_m})^{i_m}$$ 
where $i_1+\dots +i_m=i$ for all $w_1,\dots,w_m\in W$. 
\end{prop}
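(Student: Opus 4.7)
The plan is to combine Panin's description $\im(\resl)=\langle i_{w,\xi}\,g_w\rangle_{w\in W}$ with the multiplicativity of the total $\gamma$-class to reduce the computation of $\phi_i$ to an explicit calculation with the line bundles $g_w$. For each $w\in W$ I would fix a lift $e_w\in\K0(\XX)$ with $\resl(e_w)=i_{w,\xi}\,g_w$ (available precisely because $i_{w,\xi}\,g_w\in\im(\resl)$); since $\gamma_j$ and $c_j$ commute with $\resl$, the image of $\phi_i$ is determined by the corresponding Chern-class computations performed in $\K0(X)$.

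For $\phi_1$ the argument is essentially formal: $\gamma_1$ is additive, $c_1\circ\gamma_1=c_1$ on $\K0(X)$, and the induced map $c_1\colon\igam\to\CH^1(X)$ is an isomorphism for $i=1$ (Lemma~\ref{lem:prelims}), so $\phi_1$ is just $x\mapsto c_1(\resl(x))$, whose image equals $c_1\bigl(\im(\resl)\bigr)=\langle i_{w,\xi}\,c_1(g_w)\rangle_{w\in W}$.

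For general $i$ I would prove the two inclusions separately. For the easier one, I evaluate $\phi_i$ on the element $\gamma_{i_1}(e_{w_1})\cdots\gamma_{i_m}(e_{w_m})\in\gamma^i\K0(\XX)$: since each $g_w$ is a line bundle, the identity $\gamma_t(n\,g_w)=(1+\gamma_1(g_w)\,t)^n$ gives $\gamma_{i_l}(i_{w_l,\xi}\,g_{w_l})=\binom{i_{w_l,\xi}}{i_l}\gamma_1(g_{w_l})^{i_l}$; multiplying the factors and applying $c_i$ via \eqref{firsteq} produces exactly the listed generator, up to the sign $(-1)^{i-1}$ which is irrelevant for the $\ZZ$-span. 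For the reverse inclusion I would take an arbitrary generator $\gamma_{j_1}(x_1)\cdots\gamma_{j_k}(x_k)$ of $\gamma^i\K0(\XX)$ with $\sum_l j_l=i$, expand $\resl(x_l)=\sum_w c_{l,w}\,g_w$ with $c_{l,w}=a_{l,w}\,i_{w,\xi}$, and invoke the key factorization
$$(1+\gamma_1(g_w)\,t)^{c_{l,w}}=\bigl((1+\gamma_1(g_w)\,t)^{i_{w,\xi}}\bigr)^{a_{l,w}}.$$
This writes each $\gamma_{j_l}(\resl(x_l))$ as an integer combination of products $\prod_w\binom{i_{w,\xi}}{r_{l,w}}\gamma_1(g_w)^{r_{l,w}}$ with $\sum_w r_{l,w}=j_l$; multiplying across $l$ and applying $c_i$ via \eqref{firsteq} then expresses $\phi_i(\gamma_{j_1}(x_1)\cdots\gamma_{j_k}(x_k))$ as a $\ZZ$-combination of the listed generators. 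The main obstacle is the bookkeeping in this last step: the decisive ingredient is the factorization above, which is what forces the binomial coefficients in the final formula to be $\binom{i_{w,\xi}}{\cdot}$ rather than the cruder $\binom{c_{l,w}}{\cdot}$; everything else reduces to formal properties of characteristic classes.
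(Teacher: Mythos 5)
Your approach is essentially the paper's, resting on the same two ingredients: the identity $\gamma(n L)=(1+\gamma_1(L)t)^n$ for line bundles and the Chern-class formula \eqref{firsteq}. Your ``easy inclusion'' computation on $\gamma_{i_1}(i_{w_1,\xi}g_{w_1})\cdots\gamma_{i_m}(i_{w_m,\xi}g_{w_m})$ is exactly the paper's displayed calculation. What you add is a genuine justification of the reverse inclusion: the paper simply asserts ``by the definitions'' that $\im(\resg^{(i)})$ is generated by those specific products, whereas you take an arbitrary generator $\gamma_{j_1}(x_1)\cdots\gamma_{j_k}(x_k)$, expand $\resl(x_l)=\sum_w a_{l,w}\,i_{w,\xi}\,g_w$ over the Steinberg sublattice, and exploit the factorization $(1+\gamma_1(g_w)t)^{a_{l,w}i_{w,\xi}}=\bigl((1+\gamma_1(g_w)t)^{i_{w,\xi}}\bigr)^{a_{l,w}}$. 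You correctly identify this as the decisive step: it is what guarantees the coefficients are integer polynomials in the $\binom{i_{w,\xi}}{\cdot}$ rather than the a priori unrelated $\binom{a_{l,w}i_{w,\xi}}{\cdot}$.

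One small imprecision in the write-up: when $|a_{l,w}|\neq 1$, the $t^{j_l}$-coefficient of the factor $\bigl((1+\gamma_1(g_w)t)^{i_{w,\xi}}\bigr)^{a_{l,w}}$ is a $\ZZ$-combination of \emph{products} of binomials $\prod_b\binom{i_{w,\xi}}{k_{w,b}}$ with $\sum_b k_{w,b}$ equal to the exponent of $\gamma_1(g_w)$, not a single binomial per $w$ as your sentence suggests. This does not affect the conclusion, because the proposition's generating set allows the $w_a$'s to repeat (so products such as $\binom{i_{w,\xi}}{k_1}\binom{i_{w,\xi}}{k_2}c_1(g_w)^{k_1+k_2}$ are among the listed generators), but the intermediate claim as phrased is slightly off. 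With that adjustment, the argument is complete and in fact more rigorous than the paper's.
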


\begin{proof} 
By the definitions of the restriction map and the $\gamma$-filtration on $\K0(X)$, we can see that the image of $\resg^{(i)}$ is generated by products
$$
\resg(\gamma^{i/{i+1}}\K0(\XX))=\{\gamma_{i_1}(i_{w_1} g_{w_1})\cdots\gamma_{i_m}(i_{w_m} g_{w_m})| i_1+\dots+i_m=i\},
$$
where $w_1,\dots,w_m\in W$.
\par
Since the $g_w$'s are line bundles, we have
\begin{equation} \label{bingamma}
\gamma(i_w g_w)=\gamma(g_w)^{i_w}=(1+\gamma_1(g_w)t)^{i_w}=\sum_{k=1}^{i_w}\binom{i_w}{k}\gamma_1(g_w)^kt^k
\end{equation}

Consider an element in $\im(\resg^{(i)})$ of the form $x=\gamma_{i_1}(i_{w_1} g_{w_1})\cdots\gamma_{i_m}(i_{w_m} g_{w_m})$ such that  $i_1+\dots+i_m=i$ for some  $w_1,\dots,w_m\in W$. By \eqref{bingamma}
$$
x=\gamma_{i_1}(i_{w_1} g_{w_1})\cdots\gamma_{i_m}(i_{w_m} g_{w_m})=\binom{i_{w_1}}{i_1}\cdots \binom{i_{w_m}}{i_m}\gamma_1(g_{w_1})^{i_1}\cdots\gamma_1(g_{w_m})^{i_m}.
$$
Taking the total Chern class and using Lemma \ref{lem:prelims} gives
\begin{align*}
c(x)&=c\Bigl(\binom{i_{w_1}}{i_1}\cdots \binom{i_{w_m}}{i_m}\gamma_1(g_{w_1})^{i_1}\cdots\gamma_1(g_{w_m})^{i_m}\Bigr)\\
		&=c(\gamma_1(g_{w_1})^{i_1}\cdots\gamma_1(g_{w_m})^{i_m})^{\binom{i_{w_1}}{i_1}\cdots \binom{i_{w_m}}{i_m}}\\
		&=(1+(-1)^{i-1}(i-1)!c_1(\gamma_1(g_{w_1}))^{i_1}\cdots c_1(\gamma_1((g_{w_m}))^{i_m}t^i)^{\binom{i_{w_1}}{i_1}\cdots \binom{i_{w_m}}{i_m}},
\end{align*}
and so by \eqref{gammatoc},
$$
c_i(x)=(-1)^{i-1}(i-1)!\binom{i_{w_1}}{i_1}\cdots \binom{i_{w_m}}{i_m} c_1(g_{w_1})^{i_1}\cdots c_1(g_{w_m})^{i_m}.
$$
\end{proof}


\section{Tits algebras and the \texorpdfstring{$J$}{J}-invariant}

Recall that we have defined $G$ to be a split linear algebraic group of rank $n$ over a field $k$. Also, we have fixed a split maximal torus $T\subset G$ and a Borel subgroup $B\supset T$. Let $T^*$ be the character group of $T$, $\{\alpha_1,\dots,\alpha_n\}$ a set of simple roots with respect to $B$ and $\{\omega_1,\dots,\omega_n\}$ the respective set of fundamental weights, so that $\alpha_i^\vee(\omega_j)=\delta_{ij}$. We have $\Lambda_r\subset T^*\subset \Lambda$, where $\Lambda_r$ is the root lattice and $\Lambda$ is the weight lattice. Consider the simply connected cover $\tilde{G}$ of $G$ with corresponding Borel subgroup $\tilde{B}$ and maximal split torus $\tilde{T}$. Given any $\lambda\in \Lambda=Hom(\tilde{T}, \mathbb{G}_m)$, we can lift $\lambda:\tilde{T}\rightarrow \mathbb{G}_m$ uniquely to $\lambda:\tilde{B}\rightarrow \mathbb{G}_m$. Letting 
$$
\tilde{G}\times^{\tilde{B}}V_1=\tilde{G}\times V_1/{(g,v)\sim (g\cdot b, \lambda(b)^{-1}\cdot v)},
$$ 
the projection map $\tilde{G}\times^{\tilde{B}}V_1\to \tilde{G}/\tilde{B}$ defines a line bundle $\mathcal{L}(\lambda)$ over $\tilde{G}/\tilde{B}=G/B$, the variety of Borel subgroups of $G$ (cf. \cite[\S1.5]{De74}). 
\par
For a fixed $\xi\in Z^1(k,G)$, we can associate to each weight $\lambda$ a central simple $k$-algebra $A_{\xi,\lambda}$, called a Tits algebra of $G$ (cf. \cite{Ti71}).  We define the Tits map 
$$
\beta_\xi:\Lambda/\Lambda_r\rightarrow Br(k)
$$
by sending $\bar{\lambda}\mapsto [A_{\xi,\bar{\lambda}}]$, its class in the Brauer group. This map is a group homomorphism for a fixed $\xi$, with $\bar{\lambda}_1+\bar{\lambda}_2\mapsto [A_{\xi,\bar{\lambda}_1}]\otimes[A_{\xi,\bar{\lambda}_2}]$. 

Consider again the variety $X=G/B$. The  degree 1 characteristic map in the simply connected case
$$
\cc_{sc}^{(1)}:\Lambda\tilde{\to} \CH^1(X)
$$
defines an isomorphism such that the cycles $h_i=c_1(\mathcal{L}(\omega_i))$, $i=1,\dots,n$ form a $\ZZ$-basis of $\CH^1(X)$. The degree 1 characteristic map is the restriction of this isomorphism to the character group $T^*$
$$
\cc^{(1)}:T^*\to \CH^1(X),
$$
mapping $\lambda=\sum_{i=1}^na_i\omega_i\mapsto c_1(\mathcal{L}(\lambda))=\sum_{i=1}^na_ih_i$. In general, this defines the characteristic map 
$$
\mathfrak{c}\colon S^*(T^*) \to \CH(X).
$$

We denote by $\pi:\CH^*(X)\to \CH^*(G)$ the pull-back induced by the natural projection $G\to X$. By \cite[Section 4, Rem. 2]{Gr58}, $\pi$ is surjective and its kernel is given by the ideal $I\subset \CH^*(X)$ generated by the non-constant elements in the image of the characteristic map. In particular, we have $I^{(1)}=\im(\cc^{(1)})$, and
$$
\CH^1(G)\simeq \CH^1(X)/(\im(\cc^{(1)}))\simeq \Lambda/T^*.
$$
Given a prime $p$, set $Ch(X)=\CH(X)\otimes\mathbb{F}_p$. Taking $\Fp$-coefficients, we have
$$
Ch^1(G)\simeq Ch^1(X)/(\im(\cc^{(1)}))\simeq \Lambda/T^*\otimes_\ZZ\Fp.
$$

It is known (cf. \cite{Kc85}) that $Ch(X)/I$ is isomorphic (as an $\mathbb{F}_p$-algebra, as well as a Hopf algebra) to a truncated polynomial ring of the form
$$
Ch(X)/I\cong \mathbb{F}_p[x_1,\dots,x_r]/(x_1^{p^{k_1}},\dots,x_r^{p^{k_r}})
$$
for some integers $r$ and $k_i\geq0$ for $i=1,\dots,r$, which are dependent on the group $G$. For each $i$, we let $d_i$ be the degree of the generator $x_i$. The number of generators of degree 1 is given by the dimension over $\Fp$ of the vector space $ \Lambda/T^*\otimes_\ZZ\Fp$. 
\par
Let  $s=dim_{\Fp}(Ch^1(G))$. Then, since $\omega_1,\dots,\omega_n$ generate $\Lambda$ we may choose a minimal set $\{i_1,\dots,i_s\}\subset \{1,\dots,n\}$ such that the classes of $\omega_{i_1},\dots,\omega_{i_s}$ generate $\Lambda/T^*\otimes\Fp$. Then, $h_{i_l}=c_1(\mathcal{L}(\omega_{i_l}))$, $l=1,\dots,s$ generate $Ch^1(X)$ and so we may take $x_l=\pi(h_{i_l})$, $l=1,\dots,s$ to be the generators of $Ch^1(G)$.
\par
In fact, this definition of the generators $x_1,\dots,x_s$ can be simplified using properties of the Steinberg basis. For $i=1,\dots,n$, $g_i:=\mathcal{L}(\omega_i)-\mathcal{L}(\alpha_i)$. But, $\Lambda_r\subset T^*$ implies that $c_1(\mathcal{L}(\alpha_i))\in \im(\cc^{(1)})$ and hence $c_1(\mathcal{L}(\alpha_i))\in ker(\pi)$ for all $i=1,\dots,n$. So, for each $l=1,\dots,s$, we have
$$
\pi(h_{i_l})=\pi(c_1(\mathcal{L}(\omega_{i_l})))=\pi(c_1(g_{i_l}))+\pi(c_1(\mathcal{L}(\alpha_{i_l})))=\pi(c_1(g_{i_l})).
$$
Thus, we may take the generators of $Ch^1(G)$ to be $x_l=\pi(c_1(g_{i_l}))$ for $l=1,\dots,s$. 

Let $H$ be the subgroup in $Br(k)$ generated by the classes of the Tits algebras $A_{\omega_{i_l}}=\beta_\xi(\bar{\omega}_{i_l})$ for $l=1,\dots,s$. Since $\Lambda/\Lambda_r$ is a finite abelian group, $H$ is a finite abelian group as well.
We define the {\em common index} $i_c$ of $\xi$ modulo $p$ by
$$
i_c:=\gcd\{\ind(A_{\omega_{i_1}}^{\otimes a_1}\otimes\cdots\otimes A_{\omega_{i_s}}^{\otimes a_s})\mid \text{at least one $a_l$ is coprime to $p$}\}.
$$

\begin{ex}
Let $G$ be a group of inner type $E_6$. In this case, $H$ is a cyclic group generated by a Tits algebra with index $3^d$ for $d=0,\dots,3$ \cite[6.4.1]{Ti71}. Therefore, by the definition of the common index, we have $i_c=3^d$ as well.
\end{ex}

We impose a well-ordering on the monomials $x_1^{m_1}\cdots x_r^{m_r}$ known as the \textit{DegLex} order \cite{PSZ}. For ease of notation, we denote the monomial $x_1^{m_1}\cdots x_r^{m_r}$ by $x^M$, where $M$ is the $r$-tuple of integers $(m_1,\dots,m_r)$, and set $|M|=\sum_{i=1}^rd_i m_i$. Given two $r$-tuples $M=(m_1,\dots,m_r)$ and $N=(n_1,\dots,n_r)$, we say that $x^M\leq x^N$ (or equivalently $M\leq N$) if either $|M|<|N|$, or $|M|=|N|$ and $m_i\leq n_i$ for the greatest $i$ such that $m_i\neq n_i$.
\par
Consider the restriction map $\resch\colon \CH^*(\XX)\to \CH^*(X)$.
Let $I_\xi$ denote the ideal generated by the non-constant elements in the image of $\resch$. In \cite[Thm. 6.4(1)]{KM06}, it is proven that  $I_\xi\supseteq I$ and that there always exists a $\xi$ over some field extension of $k$ such that $I_\xi=I$. Such a $\xi$ is called a ``generic" torsor.
\par
This inclusion induces surjections $\CH(X)/I\twoheadrightarrow \CH(X)/I_\xi$ and $Ch(X)/I\twoheadrightarrow Ch(X)/I_\xi$. For each $1\leq i\leq r$, we define $j_i$ to be the smallest integer such that $I_\xi$ contains an element $a$ of the form
$$
a=x_i^{p^{j_i}}+\sum_{x^M<x_i^{p^{j_i}}}c_Mx^M, \;\;\; c_M\in\Fp.
$$
While $r, d_i$ and $k_i$ for $i=1,\dots,r$ depend only on the group $G$, the values $j_1,\dots,j_r$ depend also on the choice of $\xi$. Thus given the \textit{DegLex} ordering defined above, we have a well-defined $r$-tuple $J_{p,\xi}(G)=(j_1,\dots,j_r)$, called the $J$-invariant of $G$. We note that $(0,\dots,0)\leq (j_1,\dots,j_r)\leq (k_1,\dots,k_r)$ for any choice of $\xi$. 
\par
Let $J_p^{(1)}=\{j_i \mid d_i=1\}$ be the subtuple of $J_{p,\xi}(G)$ consisting of only degree 1 parameters. We say that $J_p^{(1)}>m$ if for every index $j_l$ such that $k_l>m$ we have $j_l>m$.


\section{The main result}

For a fixed prime $p$, we have defined a minimal subset $\{\omega_{i_1},\dots,\omega_{i_s}\}\subset \Lambda$ such that the elements $x_l=\pi(c_1(g_{i_l}))$, $l=1,\dots,s$ generate $Ch^1(G)$. Let $I\subset \CH(X)$ be the ideal generated by the non-constant elements in the image of the characteristic map $\mathfrak{c}:S^*(T^*)\to \CH(X)$ and $I_\xi\subset \CH(X)$ be be the ideal generated by the non-constant elements in the image of the restriction map $\resch:\CH(\XX)\to \CH(X)$. 
For any integer $m$, we let $I^{(m)}\subset \CH^m(X)$ and $I_\xi^{(m)}\subset \CH^m(X)$ denote the homogeneous parts of these ideals of degree $m$. 

\begin{thm} \label{mainthm}
 If $v_p(i_c)>0$, then $I_\xi^{(1)}=I^{(1)}$. If $v_p(i_c)>1$, then $I_\xi^{(m)}=I^{(m)}$ for $m=2,\ldots, p$.
\end{thm}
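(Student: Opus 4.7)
The plan is to establish $I_\xi^{(m)}\subseteq I^{(m)} \pmod{p}$ (the reverse inclusion is \cite[Thm.~6.4(1)]{KM06}, and the mod-$p$ reading is the one consistent with the use in the $J$-invariant), by induction on $m$. The two main tools are Proposition~\ref{propsurj}, which links the $\gamma$-graded pieces of $\K0$ to Chow groups $p$-locally, and Proposition~\ref{propform}, which gives explicit generators of $\im(\phi_m)$ in terms of the Tits indices $i_{w,\xi}$ and the classes $c_1(g_w)$.

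First I would reduce to controlling $\im(\resch^{(m)})$. Since $I_\xi$ is the ideal of $\CH(X)$ generated by $\resch(\CH^{>0}(\XX))$,
\[
I_\xi^{(m)} \;=\; \resch(\CH^m(\XX)) \;+\; \sum_{k=1}^{m-1} I_\xi^{(k)} \cdot \CH^{m-k}(X),
\]
and by the inductive hypothesis the tail lies in $I^{(m)}$ mod $p$, so it suffices to show $\im(\resch^{(m)})\subseteq I^{(m)}$ mod $p$. To bring Proposition~\ref{propform} to bear, apply Proposition~\ref{propsurj} to the smooth projective variety $\XX$: since $m\leq p$, $(m-1)!$ is a unit mod $p$, so $\gamma^{m/m+1}\K0(\XX)\to\CH^m(\XX)$ is surjective mod $p$, and the commutative square of Section~2 then yields $\im(\resch^{(m)}) \equiv \im(\phi_m)\pmod p$.

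Next I would run a case analysis on the generators of $\im(\phi_m)$ furnished by Proposition~\ref{propform}. Since $(m-1)!$ is a unit mod $p$, these reduce to products $\prod_j\binom{i_{w_j,\xi}}{i_j} c_1(g_{w_j})^{i_j}$ with $\sum_j i_j = m$. Writing $g_{w_j}=\mathcal{L}(\lambda_{w_j})$, the class $c_1(g_{w_j})$ has image $\bar\lambda_{w_j}$ in $Ch^1(G)\cong (\Lambda/T^*)\otimes\Fp$. If some $c_1(g_{w_j})$ lies in $I^{(1)}+p\CH^1(X)$, then writing $c_1(g_{w_j})=a+pb$ with $a\in I^{(1)}$ and expanding the product puts it in $I^{(m)}+p\CH^m(X)$. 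Otherwise every $\bar\lambda_{w_j}\neq 0$, and expanding each in the basis $\bar\omega_{i_1},\dots,\bar\omega_{i_s}$ realizes $A_{\lambda_{w_j}}\sim A_{\omega_{i_1}}^{\otimes a_1}\otimes\cdots\otimes A_{\omega_{i_s}}^{\otimes a_s}$ with some $a_l$ coprime to $p$, so by the definition of $i_c$ combined with Panin's identification $i_{w,\xi}=\ind(A_{\lambda_w})$ we conclude $i_c\mid i_{w_j,\xi}$ for every $j$. A direct $p$-adic estimate gives $v_p\binom{n}{k}\geq v_p(n)-v_p(k!)$ for $k\leq p$, hence $v_p\binom{i_{w_j,\xi}}{i_j}\geq v_p(i_c)-v_p(i_j!)$. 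Since $v_p(i_j!)\leq 1$ in the range $i_j\leq p$, the hypothesis $v_p(i_c)\geq 2$ (resp.\ $v_p(i_c)\geq 1$ in the $m=1$ case, where $i_j=1$ and $v_p(1!)=0$) forces $p\mid\binom{i_{w_j,\xi}}{i_j}$, so the whole generator vanishes mod $p$.

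The delicate step is verifying that Proposition~\ref{propsurj} applies in the same form to the twisted variety $\XX$, whose $\K0$ is not in general spanned by classes of line bundles. One must observe that the splitting-principle argument in its proof is intrinsic to any smooth projective $k$-variety and makes no use of the split structure of $X$. The remainder is bookkeeping: Panin's theorem identifies $i_{w,\xi}$ with $\ind(A_{\lambda_w})$, and the strengthening of the hypothesis from $v_p(i_c)>0$ to $v_p(i_c)>1$ in passing from $m=1$ to $m\leq p$ is forced precisely by the factor $v_p(p!)=1$ appearing at the extreme case $i_j=p$.
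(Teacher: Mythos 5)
Your proof is correct, modulo the observation (which you flag yourself) that the stated equality must be read modulo $p$; this is indeed the intended reading, consistent with the paper's own closing step where the generator is shown to vanish ``in $Ch^m(X)$.''

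The overall skeleton — reduce to $\im(\resch^{(m)})\subseteq I^{(m)}$ using the inductive hypothesis on the lower-degree tail, then pass from Chow to $K_0$ via Proposition~\ref{propsurj} applied to $\XX$ and the commutative square, then analyze the explicit generators from Proposition~\ref{propform} — is the same as the paper's. Where you diverge is in how the generators are dispatched. The paper splits the partition $(i_1,\ldots,i_k)$ into two cases: if all $i_l<m$, it feeds each factor $\binom{i_{w_l}}{i_l}c_1(g_{w_l})^{i_l}$ back into the inductive hypothesis (as a generator of $\im(\resch^{(i_l)})\subseteq I_\xi^{(i_l)}=I^{(i_l)}$), so the product lands in $I^{(i_1)}\cdots I^{(i_k)}\subseteq I^{(m)}$; only in the extreme case $k=1$, $i_1=m$ does it invoke a $p$-adic estimate on a single binomial $\binom{i_w}{m}$. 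You instead dichotomize on whether some $c_1(g_{w_j})$ already dies in $Ch^1(X)/I^{(1)}$: if so, you absorb the whole monomial into $I^{(m)}+p\CH^m(X)$ directly; if not, you read $i_c\mid i_{w_j,\xi}$ off the definition of $i_c$ for \emph{every} $j$ and apply the estimate $v_p\binom{n}{k}\geq v_p(n)-v_p(k!)$ uniformly to each factor. Your route is cleaner in that it bypasses reuse of the inductive hypothesis inside the analysis of a single generator (the induction is confined to the tail $\sum_{k<m}I_\xi^{(k)}\cdot\CH^{m-k}(X)$), and it makes transparent exactly why the hypothesis jumps from $v_p(i_c)>0$ to $v_p(i_c)>1$: the loss of one from $v_p(p!)=1$ at $i_j=p$. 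The paper's route has the advantage of needing the $p$-adic estimate only once, at the boundary case, rather than for all partitions; both are valid, and your observation that Proposition~\ref{propsurj} applies to $\XX$ because the splitting-principle argument is intrinsic to any smooth projective variety is exactly the point that justifies the common first step.
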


\begin{proof}
Since we know already that $I\subseteq I_\xi$, it suffices to prove that $I_\xi^{(m)}\subseteq I^{(m)}$ for all $m=1,\dots,p$ under the relevant hypothesis on $i_c$. By Proposition \ref{propsurj} and the commutative diagram in Section 2, we have that for any $i\geq 0$,
$$
\im(\resch^{(i)})=\cigam(\im(\resg^{(i)})
$$
over the coefficient ring $\ZZ[\tfrac{1}{(i-1)!}]$.
\par
We begin first with the case $m=1$. By the definition of $I_\xi$, we have $I_\xi^{(1)}=\im(\resch^{(1)})$. To show that $I_\xi^{(1)}\subseteq I^{(1)}$, we must prove that if $v_p(i_c)>0$, then for any $w\in W$, the element $i_w c_1(g_w)$ belongs (after tensoring with $\mathbb{F}_p$) to $I^{(1)}=\im(\mathfrak{c}^{(1)})$. Recall that $g_w=\mathcal{L}(\rho_w)$, and that we may write $\rho_w=\sum_{i=1}^n a_i\omega_i$. Taking the total Chern class, we have
$$
c(g_w)=c(\mathcal{L}(\omega_1)^{\oplus a_1}\oplus\dots\oplus \mathcal{L}(\omega_n)^{\oplus a_n}) =1+\Bigl(\sum_{i=1}^n a_ic_1(\mathcal{L}(\omega_i))\Bigr)t+\dots,
$$
and hence, 
$$c_1(g_w)=\sum_{i=1}^n a_i c_1(\mathcal{L}(\omega_i))=\sum_{i=1}^n a_i (c_1(g_i)+c_1(\mathcal{L}(\alpha_i)))=\sum_{l=1}^sa_{i_l}c_1(g_{i_l})\text{        mod $\im(\mathfrak{c}^{(1)})$},
$$
by the results of Section 3.
If all $a_{i_l}\in\mathbb{Z}$ are divisible by $p$, we are done. So we assume at least one $a_{i_l}$ is coprime to $p$.

Applying the Tits map $\beta_\xi$ to the class of $\rho_w$, we get
$$
\beta_\xi(\bar{\rho}_w)=\beta_\xi\bigl(\sum_{l=1}^sa_{i_l}\bar{\omega}_{i_l}\bigr)=\bigotimes_{l=1}^s\beta_\xi(\bar{\omega}_{i_l})^{\otimes a_{i_l}}=\bigotimes_{l=1}^s[A_{\xi,\bar{\omega}_{i_l}}]^{\otimes a_{i_l}}
$$
By the assumptions that at least one of the $a_{i_l}$ is coprime to $p$ and that $v_p(i_c)>0$, we have $\beta_\xi(\bar{\rho}_w)\in H\setminus\{1\}$. Thus, $p|i_w=\ind(\beta_\xi(\bar{\rho}_w))$, and so $i_wc_1(g_w)=0$ in $Ch^1(X)$. 
\par
For the case $m>1$ we work under the hypothesis that $v_p(i_c)>1$, and proceed by induction.
We assume that the result $I_\xi^{(m')}\subseteq I^{(m')}$ holds for all $m'<m$. It can be seen that
$$
I_\xi^{(m)}=\Biggl(\bigoplus_{j=1}^{m-1}\CH^{m-j}(X)\cdot \im(\resch^{(j)})\Biggr)\oplus \im(\resch^{(m)}).
$$
By the inductive hypothesis, $\im(\resch^{(j)})\subseteq I_\xi^{(j)}\subseteq I^{(j)}$ for $1\leq j\leq m-1$, which implies that $\CH^{m-j}(X)\cdot \im(\resch^{(j)})\subseteq I^{(m)}$ for $1\leq j \leq m-1$. It remains to show that $\im(\resch^{(m)})\subseteq I^{(m)}$. 
\par
By Proposition \ref{propform} we know that $\im(\resch^{(m)})=c_m(\im(\resg^{(m)}))$, and is generated by elements of the form 
$$
a=(m-1)!\binom{i_{w_1}}{i_1}\cdots\binom{i_{w_k}}{i_k}c_1(g_{w_1})^{i_1}\cdots c_1(g_{w_k})^{i_k},
$$
where $i_1+\dots+i_k=m$, and $w_1,\dots,w_k\in W$. 
\par
If $i_l<m$ for all $l=1,\dots,k$, then $\binom{i_{w_l}}{i_l}c_1(g_{w_l})^{i_l}\in I^{(i_l)}$ by the inductive hypothesis. Therefore, $a\in I^{(i_1)}\cdots I^{(i_k)}\subseteq I^{(m)}$. If, on the other hand, $a$ is of the form $a=\binom{i_w}{m}c_1(g_w)^m$, then we apply the previous argument. Namely, we have
$$
\rho_w=\sum_{i=1}^na_i\omega_i,
$$
which implies 
$$
(c_1(g_w))^m=\Bigl(\sum_{i=1}^na_ic_1(\mathcal{L}(\omega_i))\Bigr)^m=\Bigl(\sum_{i=1}^na_i(c_1(g_i)+c_1(\mathcal{L}(\alpha_i)))\Bigr)^m.
$$
Again, $c_1(\mathcal{L}(\alpha_i))\in \im(\cc^{(1)})$ implies $c_1(\mathcal{L}(\alpha_i))\in I^{(1)}$ for all $i=1,\dots,n$, and so all terms in the above expansion that are divisible by some $c_1(\mathcal{L}(\alpha_i))$ are contained in $I^{(m)}$. As in the previous case, we may write
$$
(c_1(g_w))^m=\Bigl(\sum_{l=1}^sa_{i_l}c_1(g_{i_l})\Bigr)^m\text{    mod $I^{(m)}$}.
$$
If $a_{i_l}$ is divisible by $p$ for all $l=1,\dots,s$, then we are done, so we assume that at least one $a_{i_l}$ is coprime to $p$. As before, this ensures that $i_c\mid \ind(\beta_\xi(\bar{\rho}_w))$, and so $v_p(i_w)\ge v_p(i_c)$. It is clear that for any $b\in\mathbb{Z}_{>0}$ if $v_p(b)>1$ then $p \mid\binom{b}{l}$ for all $1\leq l \leq p$. Thus, under the hypothesis that $v_p(i_c)>1$, we have $p\mid \binom{i_w}{m}$, and so $\binom{i_w}{m}(c_1(g_w))^m=0$ in $Ch^m(X)$. 
\end{proof}

With this we obtain the following result, which can be seen as a generalization of Theorem 3.8 in \cite{QSZ}.

\begin{cor} \label{cor:main}
Let $p$ be a prime number. If $v_p(i_c)>0$, then $J^{(1)}_p> 0$.
If $v_p(i_c)>1$, then $J^{(1)}_p>1$.
\end{cor}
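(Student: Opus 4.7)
The plan is to derive both statements directly from Theorem~\ref{mainthm} by contradicting the minimality of $j_l$ for the degree~$1$ parameters. The key structural input is the description $Ch(X)/I \cong \mathbb{F}_p[x_1,\dots,x_r]/(x_1^{p^{k_1}},\dots,x_r^{p^{k_r}})$, in which the surviving monomials $x^M$ (with $m_i < p^{k_i}$) form an $\mathbb{F}_p$-basis. Since $I_\xi$ is a graded ideal (it is generated by the image of the graded map $\resch$), for any element $a\in I_\xi$ each homogeneous component of $a$ also lies in $I_\xi$; this lets us pass freely between $I_\xi$ and $I_\xi^{(m)}$.

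For the first claim, assume $v_p(i_c) > 0$ and suppose for contradiction that some index $l$ with $d_l = 1$ and $k_l > 0$ satisfies $j_l = 0$. The definition of $j_l$ furnishes an element of $I_\xi$ whose leading (DegLex-maximal) term is $x_l$; extracting the degree~$1$ homogeneous part produces an element of $I_\xi^{(1)}$ of the form $x_l + \sum_{j<l,\,d_j=1} c_j x_j$ with $c_j \in \mathbb{F}_p$. By Theorem~\ref{mainthm}, $I_\xi^{(1)} = I^{(1)}$, so this element vanishes in $Ch(X)/I$. But $k_l > 0$ means $x_l$ is a nonzero basis monomial, linearly independent from the $x_j$ with $j<l$, a contradiction. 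Hence $j_l > 0$ for all such $l$, i.e.\ $J_p^{(1)} > 0$.

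For the second claim, assume $v_p(i_c) > 1$. The first claim (which applies since $v_p(i_c) > 0$) already yields $j_l \geq 1$ for every $l$ with $d_l = 1$ and $k_l > 0$. Suppose for contradiction that some $l$ with $d_l = 1$ and $k_l > 1$ has $j_l = 1$. Then $I_\xi$ contains an element whose homogeneous degree-$p$ part is $x_l^p + \sum_{x^M < x_l^p,\ |M|=p} c_M x^M \in I_\xi^{(p)}$. By Theorem~\ref{mainthm}, $I_\xi^{(p)} = I^{(p)}$, so this element is zero in $Ch(X)/I$. Since $k_l > 1$, we have $p < p^{k_l}$, so $x_l^p$ is itself a basis monomial of the truncated polynomial ring, and it cannot equal an $\mathbb{F}_p$-combination of strictly smaller basis monomials, a contradiction. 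Hence $j_l \geq 2$ whenever $d_l = 1$ and $k_l > 1$, i.e.\ $J_p^{(1)} > 1$.

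The proof is essentially bookkeeping built on top of Theorem~\ref{mainthm}. The only delicate point is to verify that the ``smaller'' monomials appearing in the definition of $j_l$ are indeed distinct basis vectors of $Ch(X)/I$ strictly below the leading monomial $x_l^{p^{j_l}}$; this is immediate from the DegLex ordering combined with the basis structure of the truncated polynomial ring, which is why the leading term cannot be cancelled and the contradiction goes through.
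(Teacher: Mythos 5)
Your proof is correct and follows essentially the same route as the paper's own argument. The paper phrases it in terms of $R_\xi = \im(\pi\circ\resch) \subset Ch(G)$ and shows directly that $R_\xi^{(1)} = 0$ (resp.\ $R_\xi^{(p)} = 0$), whereas you run the contrapositive through $I_\xi^{(1)} = I^{(1)}$ (resp.\ $I_\xi^{(p)} = I^{(p)}$) and the gradedness of $I_\xi$; since $R_\xi = \pi(I_\xi)$ and $\ker\pi = I$, these are the same observation packaged in two equivalent ways, and both then conclude by the linear independence of the surviving monomials in $Ch(X)/I$.
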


\begin{proof}
Consider the diagram
$$
\xymatrix{
Ch(\XX) \ar[r]^{\resch} & Ch(X) \ar[r]^\pi & Ch(G).
}
$$
We begin first with the hypothesis that $v_p(i_c)>0$.
\par
Let $R_\xi=\im(\pi\circ \resch)$. Then $\pi(a)\in R_\xi^{(1)}$ implies that $a\in \im(\resch^{(1)})=I^{(1)}$ by Theorem \ref{mainthm}. Thus, $\pi(a)=0\in Ch^1(X)/I$ and so $R_\xi^{(1)}=\{0\}$. Let $x_1,\dots,x_s$ be generators of degree 1 in $Ch^1(G)$. By the definition of the $J$-invariant, $j_1$ is the smallest non-negative integer $m$ such that $x_1^{p^m}\in R_\xi$. Since $x_1$ is non-trivial, we must have $x_1^{p^0}=x_1\not\in R_\xi^{(1)}$ by the above argument, and so $j_1>0$.
\par
The same argument applies for the remaining generators. Let $1< i\leq s$, then if $x^M<x_i$, $x^M=x_j$ for some $j<i$. Since $x_i+a_{i-1}x_{i-1}+\dots+a_1x_1$ is non-trivial for any $a_1,\dots,a_{i-1}\in \mathbb{F}_p$, it cannot belong to $R_\xi^{(1)}$. Therefore $j_i>0$, and so $J_p^{(1)}>0$.

Under the hypothesis that $v_p(i_c)>1$, suppose again that $x_1,\dots,x_s$ are a minimal set of generators of degree 1 in $Ch(G)$. We have the inclusion $\im(\resch^{(p)})\subset I_\xi^{(p)}$ and by Theorem \ref{mainthm}, $I_\xi^{(p)}=I^{(p)}$. Again, $R_\xi^{(p)}=\im(\pi\circ \resch^{(p)})=\{0\}$. To show that $J_p^{(1)}>1$, we begin with the generator $x_1$. If $k_1\leq 1$ we are done, so suppose $k_1>1$. Then, $x_1^{p^1}=x_1^p\in Ch^p(G)$ is non-trivial, and so $x^p\not\in R_\xi^{(p)}$ implies $j_1>1$. Again, we extend the argument for the remaining generators. Suppose that $k_i>1$ for some $1<i\leq s$. Then, the element 
$$\pi(a)=x_i^p+\sum_{(x^M<x_i^p)\cap (|M|=p)}a_Mx^M
$$ 
is non-trivial for any $a_M\in \Fp$ and hence $\pi(a)\notin R_\xi^{(p)}$, and so $j_i>1$. Thus $J_p^{(1)}>1$. 
\end{proof}


\section{Applications}

We now apply the results of the previous section to some $E_6$ varieties.
For this, we will require the following result concerning the possible values of the $J$-invariant.

\begin{lem} \label{lem:PS07}
Let $G$ be a semisimple algebraic group of inner type over $k$, $p$ a prime integer and $J_p(G) = (j_1,\dots,j_r)$. Assume $d_i=1$ for some $i=1,\dots,r$. Then $j_i\leq \max v_p(\ind A)$, where $A$ runs through all Tits algebras of $G$.
Conversely, if there exists a Tits algebra $A$ of $G$ with $v_p(\ind A)>0$, then $j_i>0$ for at least one $i$ having $d_i=1$.
\end{lem}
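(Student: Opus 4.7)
The plan is to prove the two assertions separately, both inside the subring $R_\xi=\pi\circ\resch(Ch(\XX))\subset Ch(G)$ introduced in the proof of Corollary~\ref{cor:main}; recall that $j_i$ is the smallest $m$ for which $R_\xi$ contains an element $x_i^{p^m}+\sum_{x^M<x_i^{p^m}}c_M x^M$ in DegLex order.

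For the upper bound, fix $i$ with $d_i=1$. By Section~3 this corresponds to a fundamental weight $\omega$ with $x_i=\pi(c_1(g_\omega))$, where $g_\omega$ is the associated Steinberg line bundle. Set $e=v_p(\ind A_{\xi,\omega})$; since $e\leq\max_A v_p(\ind A)$, it suffices to establish $j_i\leq e$. The strategy is to construct, for $m=e$, a class in $R_\xi^{(p^m)}$ whose leading DegLex term is $x_i^{p^m}$. Panin's theorem yields $\resl(K_0(\XX))=\langle i_{w,\xi}g_w\rangle_{w\in W}$; for the Steinberg element $w$ attached to $\omega$ (so $i_w=p^e u$ with $(u,p)=1$) I would lift $i_w g_w$ to some $\tilde\alpha\in K_0(\XX)$, build a class in $\gamma^{p^m}K_0(\XX)$ from $\tilde\alpha$, and push it through $c_{p^m}$ and $\pi$. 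Kummer/Lucas gives $\binom{i_w}{p^m}\equiv u\not\equiv 0\pmod p$, so the leading coefficient of $c_1(g_w)^{p^m}$ survives modulo $p$; after $\pi$ this becomes a nonzero scalar multiple of $x_i^{p^m}$ up to contributions from simple roots $\alpha_j\in T^*$, which are killed by $\pi$.

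The main obstacle is the factorial $(p^m-1)!$ forced by formula~\eqref{firsteq}: for $m\geq 2$ this factor is divisible by $p$, so the naive candidate $c_{p^m}(\gamma_{p^m}(\tilde\alpha))$ already vanishes in $Ch^{p^m}(X)$, and the technique of Theorem~\ref{mainthm} does not extend. Since Proposition~\ref{propsurj} guarantees surjectivity of $\cigam$ only after inverting $(i-1)!$, the image $\im\resch^{(p^m)}$ can and in general does properly contain the $K$-theoretic image $c_{p^m}(\im\resg^{(p^m)})$ once $m\geq 2$, and the desired class must be produced outside this $K$-theoretic image. My approach would be to induct on $e$: over the function field of a partial splitting variety of $A_{\xi,\omega}$ --- for instance a generalized Severi-Brauer variety $Y=SB(p^{e-1},A_{\xi,\omega})$ --- the index of the Tits algebra drops from $p^e$ to $p^{e-1}$, giving by induction a relation on $\XX_{k(Y)}$, and a transfer from $\CH^*(\XX_{k(Y)})$ back to $\CH^*(\XX)$ should lift it to a class over $k$ with the required leading DegLex term.

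For the converse I argue by contrapositive: assume $j_l=0$ for every $l$ with $d_l=1$, so $x_l\in R_\xi^{(1)}$ for all such $l$. By Proposition~\ref{propform} each $x_l$ is an $\mathbb{F}_p$-combination of $i_{w,\xi}\,\pi(c_1(g_w))$ over $w\in W$. Writing $g_w=\mathcal{L}(\rho_w)$ and expanding $\pi(c_1(g_w))=\sum_{l'}a_{w,l'}\,x_{l'}$ in the $\mathbb{F}_p$-basis $\{x_{l'}\}$ of $Ch^1(G)$ --- where the coefficients $a_{w,l'}$ simply encode $\bar\rho_w\in\Lambda/T^*\otimes\mathbb{F}_p$ in the basis $\{\bar\omega_{i_{l'}}\}$ --- the identity recovering each $x_l$ forces $\{\bar\rho_w:p\nmid i_{w,\xi}\}$ to $\mathbb{F}_p$-span $\Lambda/T^*\otimes\mathbb{F}_p$. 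Each such $\bar\rho_w$ yields $\beta_\xi(\bar\rho_w)=[A_{\xi,\rho_w}]$ of index $i_{w,\xi}$ coprime to $p$, hence Brauer order coprime to $p$. Since $\beta_\xi$ is a group homomorphism and is trivial on the image of $T^*/\Lambda_r$ (Tits algebras attached to weights in $T^*$ split), every $A_{\omega_{i_l}}=\beta_\xi(\bar\omega_{i_l})$, and hence every generator of the image of $\beta_\xi$, has index coprime to $p$, so every Tits algebra of $G$ has $v_p(\ind)=0$.
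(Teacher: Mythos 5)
Your proposal diverges from the paper in both halves, and the two halves deserve separate verdicts.

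For the upper bound $j_i\leq\max v_p(\ind A)$, the paper does not attempt to reproduce the inclusion $c_1(\mathcal{L}(\rho_w))^{p^b}\in I_\xi$ inside the $\gamma$-filtration/Chern-class framework: it simply cites \cite[Lemma~1.12]{QSZ}, whose proof uses a different mechanism (ultimately the structure of Chow groups of Severi--Brauer varieties), and then reduces to the fundamental weights via $\beta_\xi(\bar\rho_{s_{i_l}})=\beta_\xi(\bar\omega_{i_l})$. You correctly diagnose that the approach through $c_{p^m}\circ\gamma_{p^m}$ is doomed for $m\geq 2$ because of the $(p^m-1)!$ factor in~\eqref{firsteq} --- this is a genuine and subtle observation, and it explains why the lemma cannot be extracted from Theorem~\ref{mainthm}. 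But the alternative you sketch (induction on $e$ via generalized Severi--Brauer varieties $SB(p^{e-1},A)$ and a transfer) is only a sketch: the map $\operatorname{Spec} k(Y)\to\operatorname{Spec} k$ is not finite, so the naive transfer does not exist, and the argument needs either a specialization/pushforward along a closed point of $Y$ of the right degree or the full projective-bundle-plus-Brown-representability machinery from the literature. As written, this half of your proposal is an unproven outline where the paper has a citation.

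For the converse, you go a different route from the paper (which cites \cite[Prop.~4.2]{PS09}), and yours is essentially correct. You argue by contrapositive: if $j_l=0$ for every degree-one generator $x_l$, then $R_\xi^{(1)}=Ch^1(G)$, which forces the classes $\bar\rho_w$ with $p\nmid i_{w,\xi}$ to span $\Lambda/T^*\otimes\Fp$; each such $\bar\rho_w$ maps under $\beta_\xi$ to a Brauer class of period coprime to $p$, and since $\beta_\xi$ is a homomorphism the $p$-primary part of the whole image of $\beta_\xi$ vanishes. The one step you leave implicit is the passage from spanning $\Lambda/T^*\otimes\Fp$ to controlling the $p$-primary part of $(\Lambda/T^*)$ integrally: you should say explicitly that by Nakayama the $p$-components $(\bar\rho_w)_{(p)}$ generate $(\Lambda/T^*)_{(p)}$, that $\beta_\xi((\bar\rho_w)_{(p)})$ lies in a finite $p$-group and has trivial $p$-primary part, hence is trivial, so $\beta_\xi$ kills $(\Lambda/T^*)_{(p)}$. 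With that sentence added, your converse argument is complete, more self-contained than the paper's, and closely mirrors the elementary structure of the first half of Corollary~\ref{cor:main}; its cost is that it proves no more than the period statement (no control of indices as in PS09), but that is all the lemma asks for.
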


\begin{proof}
For the first statement, we note that for any $w\in W$, $c_1(\mathcal{L}(\rho_w))^{p^{b}}\in I_\xi$, where $b=v_p(i_{\rho_w})$ \cite[Lemma 1.12]{QSZ}. Since $\beta(\bar{\rho}_{s_{i_l}})=\beta(\bar{\omega}_{i_l})$ for $l=1, \dots,s$, this implies $i(\rho_{s_{i_l}})=i(\omega_{i_l})$. Letting $b_l=v_p(i_{\omega_{i_l}})$, we then have $c_1(g_{i_l})^{p^{b_l}}\in I_\xi$, and hence $j_l\leq b_l\leq \max v_p(\ind A)$ for all Tits algebras $A$ of $G$. For proof of the second statement, see \cite[Prop. 4.2]{PS09}. 
\end{proof}

\begin{ex}
Let $G$ be a simple group of type $B$, $C$, $E_6$, or $E_7$. Then, $H$ is a cyclic group of order $p=2$ or 3, and $J_p^{(1)}$ consists of a single integer $j_1$ \cite[Chap. 6]{Bou}. As a consequence of Corollary \ref{cor:main} and Lemma \ref{lem:PS07}, $j_1=0$ if and only if all of the Tits algebras of $G$ are split. 
\end{ex}

Let $G$ be a group of inner type $E_6$. As in the above example, $H$ is generated by a single Tits algebra $A$ of index $3^d$ for some $d=0,\ldots,3$.
Consider the $J$-invariant of $G$ modulo $p=3$. We note that $Ch(G)$ has precisely two generators $x_1$ and $x_2$, with $d_1=1$ and $d_2=4$, where the 3-power relations are defined by $k_1=2$ and $k_2=1$  \cite[Table II]{Kc85}. Thus
$$J_3(G)=(j_1,j_2),$$
where $j_1=0,1,2$, $j_2=0,1$. These values are independent of the characteristic of the base field.

\begin{prop}\label{maincor}
Let $G$ be a group of inner type $E_6$ and let $A$ be its Tits algebra. Then, $\ind A=1$ if and only if $j_1=0$. As well, $\ind A=3$ if and only if $j_1=1$.
\end{prop}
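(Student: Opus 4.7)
The plan is to combine Corollary~\ref{cor:main} (giving lower bounds on $j_1$ in terms of $v_3(i_c)$) with Lemma~\ref{lem:PS07} (giving an upper bound $j_1 \le \max v_3(\ind A)$) and then pin down $d = v_3(\ind A)$ exactly in the cases $d=0$ and $d=1$.

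First I would record the setup: for $G$ of inner type $E_6$, the subgroup $H \subset \mathrm{Br}(k)$ is cyclic, generated by the single Tits algebra $A$ of index $3^d$ with $d \in \{0,1,2,3\}$, so the common index at $p=3$ is $i_c = 3^d$ and $v_3(i_c)=d$. Moreover, $J_3(G) = (j_1,j_2)$ with $d_1=1$, $k_1=2$, $d_2=4$, $k_2=1$, so the degree-one sub-tuple $J_3^{(1)}$ consists of the single entry $j_1 \in \{0,1,2\}$. The key point for applying Corollary~\ref{cor:main} is that $k_1 = 2 > 1$, so the conditions $J_3^{(1)} > 0$ and $J_3^{(1)} > 1$ translate to $j_1 > 0$ and $j_1 > 1$ respectively.

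Next, for the implication $\ind A = 1 \Rightarrow j_1 = 0$, Lemma~\ref{lem:PS07} gives $j_1 \le \max v_3(\ind A) = 0$. Conversely, if $j_1 = 0$, then Corollary~\ref{cor:main} rules out $v_3(i_c) > 0$, forcing $d = 0$, i.e.\ $\ind A = 1$.

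For the second equivalence, suppose $\ind A = 3$, so $d = 1$. Then Corollary~\ref{cor:main} applied to $v_3(i_c) = 1 > 0$ yields $j_1 > 0$, while Lemma~\ref{lem:PS07} gives $j_1 \le 1$, hence $j_1 = 1$. Conversely, assume $j_1 = 1$. The case $d = 0$ is excluded by the first equivalence (it would force $j_1 = 0$); the cases $d \ge 2$ are excluded by Corollary~\ref{cor:main}, since $v_3(i_c) \ge 2 > 1$ would force $j_1 > 1$. Hence $d = 1$, i.e.\ $\ind A = 3$.

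The proof is essentially a bookkeeping exercise once the earlier results are in hand, so there is no serious obstacle; the only small care needed is verifying that the truncation relation $k_1 = 2$ allows the strict inequality $j_1 > 1$ to be meaningfully concluded from $J_3^{(1)} > 1$, which is precisely the content of the definition of the ordering on the degree-one sub-tuple.
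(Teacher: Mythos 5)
Your proof is correct and follows essentially the same route as the paper: combining the lower bounds from Corollary~\ref{cor:main} with the upper bound from Lemma~\ref{lem:PS07}, using the specific data $i_c = 3^d$, $k_1 = 2$ for inner type $E_6$. The only cosmetic difference is that the paper derives the first equivalence by citing its preceding Example (which itself rests on the same two results), whereas you unfold both directions explicitly; the logic is identical.
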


\begin{proof}
Since $A$ is a generator of the group $H$, the first case is a direct consequence of the above example. 
For the second case, suppose first that $j_1=1$. By Corollary \ref{cor:main} this implies that $\ind A=1$ or $3$. However, by the first case, $j_1\neq 0$ implies $\ind A\neq 1$ and so $\ind A=3$. Conversely, suppose $\ind A=3$. Then $v_3(\ind B)\leq 1$ for all $[B]\in H$. By Lemma \ref{lem:PS07}, $j_1\leq \max(v_3(\ind B))=1$. Again by the first case, $\ind A\neq 1$ implies $j_1\neq 0$ and so $j_1=1$. 
\end{proof}

\bibliographystyle{plain}

\end{document}